\def\jump#1{{[\hspace{-2pt}[#1]\hspace{-2pt}]}}
\let\pa=\partial
\let\ka=\kappa
\let\al=\alpha
\let\b=\beta
\let\g=\gamma
\let\d=\delta
\let\r=\rho
\let\s=\sigma
\let\f=\frac
\let\om=\omega
\let\G= \Gamma
\let\Om=\Omega
\let\e=\varepsilon
\let\pa=\partial
\let\ri=\rightarrow
\let\na=\nabla
\newcommand{\BR}{\mathbb{R}}
\newcommand{\bz}{\mathbf{z}}
\def\tom{\widetilde{\om}}
\newcommand{\beq}{\begin{equation}}
\newcommand{\eeq}{\end{equation}}
\newcommand{\beqo}{\begin{equation*}}
\newcommand{\eeqo}{\end{equation*}}
\newcommand{\ben}{\begin{eqnarray}}
\newcommand{\een}{\end{eqnarray}}
\newcommand{\beno}{\begin{eqnarray*}}
\newcommand{\eeno}{\end{eqnarray*}}
\newtheorem{theorem}{Theorem}[section]
\newtheorem{proposition}[theorem]{Proposition}
\theoremstyle{remark}
\title[A unified approach towards the impossibility of finite time vanishing depth]{A unified approach towards the impossibility of finite time vanishing depth for incompressible free boundary flows}
\author[Z. Geng]{Zhiyuan Geng}
\email{zgeng@bcamath.org}
\address{BCAM--Basque Center for Applied Mathematics, Mazarredo 14, E48009
Bilbao, Basque Country, Spain}
\author[R.Granero-Belinch\'{o}n]{Rafael Granero-Belinch\'{o}n}
\email{rafael.granero@unican.es}
\address{Departamento  de  Matem\'aticas,  Estad\'istica  y  Computaci\'on, Universidad  de Cantabria, Santander,  Spain}
\begin{document}

\begin{abstract}
In this paper we study the motion of an internal water wave and an internal wave in a porous medium. For these problems we establish that, if the free boundary and, in the case of the Euler equations, also the tangential velocity at the interface are sufficiently smooth, the depth cannot vanish in finite time. This results holds regardless of gravity and surface tension effects or, if applicable, the stratification in multiphase flows.
\end{abstract}

\maketitle

\section{Introduction}
This paper studies the motion of an incompressible fluid with a free boundary in presence of an impervious bottom. Example of such a flow are the sea near the shoreline or the case of a bounded porous aquifer. For these problems we establish a number of conditions on the interface and the tangential velocity of the interface such that if they hold, the depth cannot vanish (see Figure \ref{fig1}) in finite time. These results hold regardless of the stratification, gravity or surface tension effects. Let us also remark that, in the case of two phase Euler, a similar result was proved using Lagrangian coordinates in a very interesting paper by Daniel Coutand in \cite{coutand2019finite} (see also the related results \cite{CoSh2016,FeIoLi2016}). However, our work presents a different and unified approach to this question. Our approach is based on a careful study of the contour equation formulation.

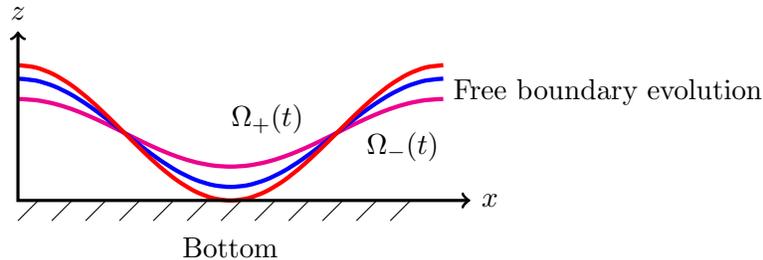
\begin{figure}[h]\label{fig1}
\begin{center}
\begin{tikzpicture}[domain=0:2*pi, scale=0.9]
\draw[ultra thick, smooth, color=magenta] plot (\x,{1+0.5*cos(\x r)});
\draw[ultra thick, smooth, color=blue] plot (\x,{1+0.8*cos(\x r)});
\draw[ultra thick, smooth, color=red] plot (\x,{1+cos(\x r)});
\draw[very thick,<->] (2*pi+0.4,0) node[right] {$x$} -- (0,0) -- (0,2.5) node[above] {$z$};
\node[right] at (3,1.2) {$\Omega_+(t)$};
\node[right] at (5,0.8) {$\Omega_-(t)$};
\node[right] at (2*pi,1.6) {Free boundary evolution};
\draw[-] (0,-0.3) -- (0.3, 0);
\draw[-] (0.5,-0.3) -- +(0.3, 0.3);
\draw[-] (1,-0.3) -- +(0.3, 0.3);
\draw[-] (1.5,-0.3) -- +(0.3, 0.3);
\draw[-] (2,-0.3) -- +(0.3, 0.3);
\draw[-] (2.5,-0.3) -- +(0.3, 0.3);
\draw[-] (3,-0.3) -- +(0.3, 0.3);
\draw[-] (3.5,-0.3) -- +(0.3, 0.3);
\draw[-] (4,-0.3) -- +(0.3, 0.3);
\draw[-] (4.5,-0.3) -- +(0.3, 0.3);
\draw[-] (5,-0.3) -- +(0.3, 0.3);
\draw[-] (5.5,-0.3) -- +(0.3, 0.3);
\draw (pi,-0.7) node {Bottom};
\end{tikzpicture}
\end{center}
\caption{The situation that is discarded by our results.}
\end{figure}

The study of incompressible fluids with a free surface is a long-standing reasearch problem for physicists, engineers and mathematicians since the XVIIIth century. In particular, the possible formation of finite time singularities where certain geometrical quantity of the free boundary blows up have been studied since the end of the XVIIIth century \cite{mariotte1700traite}. There are a number of well-known physical situations that we can fit in this framework. For instance, we can consider drop formation \cite{eggers1997nonlinear,oron1997long,moseler2000formation} and pinch-off singularities \cite{constantin2018singularity,alvarez2009pinch}, corner singularities \cite{liu2019local,liu2021search} or wetting \cite{bae2021singularity,camassa2019hydrodynamic,camassa2019singularity} to give just some examples.

In this paper we consider two different problems. On the one hand we consider the case of an internal wave in a porous medium with an impervious bottom.
\begin{equation}\label{systemMuskat}
\begin{cases}
\mu_{\pm}u_\pm =-\na p_\pm-\r_\pm(0,1)^Tg &\text{ in } \Om_\pm(t),\\
\na\cdot u_\pm=0 &\text{ in } \Om_\pm(t),\\
p_+-p_-=\g\ka &\text{ on } \G(t),\\
u_-\cdot (0,1)^T=0 &\text{ on } \{y=0\},\\
u_-\cdot (\pa_\al\bz)^\bot =u_+\cdot (\pa_\al\bz)^\bot &\text{ on } \G(t).
\end{cases}
\end{equation}
Here $g>0$ is the acceleration due to gravity force, $\gamma$ is the surface tension strength and $\ka$ is the curvature of the free surface $\G(t)$, which, for a curve, is given by
\beqo
\ka=\f{\pa_\al z_1\pa_\al^2 z_2-\pa_\al z_2\pa_\al^2z_1}{\left[((\pa_\al z_1)^2)+(\pa_\al z_2)^2\right]^{3/2}}.
\eeqo
Similarly, $p_\pm$ is the pressure, $\mu_\pm$ and $\r_\pm$ are the viscosity and density of the fluids involved. This problem is known in the literature as the (confined) two-phase Muskat problem \cite{cordoba2007contour,cordoba2014confined,granero2020growth,gancedo2020surface}.

On the other hand, we consider the motion of a water wave bounded below by a fixed impervious bottom. In this second situation the appropriate PDE's are
\begin{equation}\label{systemEuler2}
\begin{cases}
\rho_\pm(\partial_t u_\pm+(u_\pm \cdot\nabla)u_\pm )=-\na p_\pm-\r_\pm(0,1)^Tg &\text{ in } \Om_\pm(t),\\
\na\cdot u_\pm=0 &\text{ in } \Om_\pm(t),\\
\na\times u_\pm=0 &\text{ in } \Om_\pm(t),\\
p_+-p_-=\g\ka &\text{ on } \G(t),\\
u_-\cdot (0,1)^T=0 &\text{ on } \{y=0\},\\
u_-\cdot (\pa_\al\bz)^\bot =u_+\cdot (\pa_\al\bz)^\bot &\text{ on } \G(t).
\end{cases}
\end{equation}

This situation has been studied both theoretically and numerically by many different authors (see for instance \cite{ambrose2021numerical,lannes2005well,lannes2013water}). In the case of two-phase Euler, the impossibility of vanishing depth for smooth enough initial data was obtained by Daniel Coutand in \cite[Theorem 2]{coutand2019finite}. In his results, the author established that, if
$$
\|\partial_\alpha \tau\|_{L^\infty(\BR)}+\int_0^T\|\nabla u_-(s)\|_{L^\infty(\Omega_-(s))}ds<\infty,
$$
where $\tau$ is the tangent vector to $\Gamma(t)$, then the interface cannot reach the impervious bottom.

In both cases, we assume that the fluids lie above the $x-$axis, \emph{i.e.}
$$
\Omega_+(t)\cup\Omega_-(t)=\{(x,y,t)\in \BR^2_+\times \BR_+\},
$$
where $\BR^2_+$ is the upper half plane $\{(x,y)\in \BR^2: \; y>0\}$, and $\BR_+:=\{t\in\BR:\; t>0\},$ the interface is described as
\beq\label{par-gamma}
\G(t)=\{\bz(\al,t)=(z_1(\al,t),z_2(\al,t)), \al\in\BR \},
\eeq
for some functions $z_i: \BR\times\BR_+\ri\BR, i=1,2$,
and the impervious bottom is given by
\beq\label{par-bott}
\G_{\text{bot}}=\{(\al,0), \al\in\BR \}.
\eeq
Then, both problems can be written in terms of nonlinear and nonlocal evolution equations \cite{cordoba2010interface,cordoba2011interface}. Namely,
\beq\label{eq for dt z}
\pa_t \bz(\al)= \f1{2\pi}p.v.\int_\BR \bigg[ \f{(\bz(\al)-\bz(s))^\bot}{|\bz(\al)-\bz(s)|^2}-\f{(\bz(\al)-(z_1(s),-z_2(s)))^\bot}{|\bz(\al)-(z_1(s),-z_2(s))|^2} \bigg]\tom(s)\,ds+c(\al,t)\pa_\al \bz,
\eeq
where $\tom$ denotes the strength of the vorticity (or equivalently, the difference between the tangential components of the velocities \cite{aurther2019rigorous}) and $c$ is an arbitrary function that reflects the invariance of the problem by reparametrization of the free surface. The difference between the case of water waves and the case of the Muskat problem relies on the unknown $\tom$. In the case of the Muskat problem $\tom$ in general satisfies a nonlinear integral equation \cite{cordoba2011interface}, while in the case of water waves $\tom$ satisfies another nonlinear and nonlocal partial differential equation \cite{cordoba2010interface} (see below for more details).

Our main results can be stated as follows

\begin{theorem}\label{teomuskat}
Let $(\textbf{z},u_\pm,p_\pm)$ be a smooth solution of the two-phase Muskat problem (equations \eqref{eq for dt z}-\eqref{muskat}) such that $\bf{z}$ does not have any self-intersection
and verifying
$$
\sup\limits_{\al,\b\in\BR, t\in[0,T]}\f{|\al-\b|}{|\bz(\al)-\bz(\b)|}<\infty.
$$
Then we have that
\begin{itemize}
\item In the case $\mu_+=\mu_-$, if
$$
\textbf{z}\in C^{2+2\text{sgn}(\gamma)}([0,T]\times \mathbb{R}),
$$ the interface $\textbf{z}$ cannot touch the impervious bottom in $[0,T]$.
\item In the case $\mu_+\neq\mu_-$, if
$$
\textbf{z}\in C^{2+2\text{sgn}(\gamma)}([0,T]\times\BR)
$$
and
$$
(u_+-u_-)\cdot \partial_\alpha\textbf{z}\in C^1([0,T]\times \Gamma(t)),
$$
the interface $\textbf{z}$ cannot touch the impervious bottom in $[0,T]$.
\end{itemize}
\end{theorem}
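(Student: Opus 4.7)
The plan is to control the minimum depth $m(t) := \inf_{\alpha\in\mathbb{R}} z_2(\alpha, t)$ by deriving a differential inequality $|m'(t)| \leq C m(t)$, which by Gronwall yields $m(t) \geq m(0) e^{-Ct} > 0$ throughout $[0,T]$ and thus prevents the interface from reaching the bottom. At a point $\alpha^*(t)$ where the minimum is attained one has $\partial_\alpha z_2(\alpha^*, t) = 0$; hence the tangential reparametrization term $c\,\partial_\alpha \bz$ in \eqref{eq for dt z} has vanishing vertical component, and the chain rule gives $m'(t) = \partial_t z_2(\alpha^*(t), t)$ equal to the vertical component of the Birkhoff--Rott integral alone.

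The central algebraic observation is that the image point $(z_1(s), -z_2(s))$---designed to enforce the impermeability boundary condition---extracts a factor $z_2(\alpha^*) = m(t)$ from the kernel:
$$\frac{1}{|\bz(\alpha^*) - \bz(s)|^2} - \frac{1}{|\bz(\alpha^*) - (z_1(s), -z_2(s))|^2} = \frac{4\, z_2(\alpha^*)\, z_2(s)}{|\bz(\alpha^*) - \bz(s)|^2\, |\bz(\alpha^*) - (z_1(s), -z_2(s))|^2}.$$
After substitution,
$$m'(t) = \frac{2m(t)}{\pi}\,\mathrm{p.v.}\!\int_\mathbb{R} \frac{(z_1(\alpha^*) - z_1(s))\, z_2(s)\, \widetilde{\omega}(s)}{|\bz(\alpha^*) - \bz(s)|^2\, |\bz(\alpha^*) - (z_1(s), -z_2(s))|^2}\, ds,$$
and it suffices to bound the remaining integral independently of $m$. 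Splitting the domain into a far piece $|s - \alpha^*| > \delta$ and a near piece $|s - \alpha^*| \leq \delta$, the far integral is controlled uniformly in $m$ by the chord-arc hypothesis together with $\|\bz\|_{C^1}$ and $\|\widetilde{\omega}\|_{L^\infty}$. For the near piece one Taylor-expands around $\alpha^*$: since $\partial_\alpha z_2(\alpha^*) = 0$ the leading-order integrand is odd in $h = s - \alpha^*$ and its principal value cancels, while the first-order correction coming from $\partial_\alpha \widetilde{\omega}(\alpha^*)$ is even and controlled by
$$\int_{-\delta}^\delta \frac{dh}{(\partial_\alpha z_1(\alpha^*))^2 h^2 + 4m^2} \leq \frac{\pi}{2m\,|\partial_\alpha z_1(\alpha^*)|}.$$
The $1/m$ singularity exactly compensates the prefactor $m$, and $|\partial_\alpha z_1(\alpha^*)| = |\partial_\alpha \bz(\alpha^*)|$ is bounded below by the chord-arc constant. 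This produces $|m'(t)| \leq C m(t)$ with $C$ depending only on $\|\bz\|_{C^2}$, $\|\widetilde{\omega}\|_{C^1}$ and the chord-arc constant, and Gronwall closes the argument.

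The remaining task is to verify $\widetilde{\omega} \in C^1([0,T]\times\mathbb{R})$. In the case $\mu_+ = \mu_-$, Darcy's law furnishes an explicit formula for $\widetilde{\omega}$ in terms of $\partial_\alpha z_2$ and, when $\gamma > 0$, $\partial_\alpha \kappa$; the hypothesis $\bz \in C^{2+2\mathrm{sgn}(\gamma)}$ supplies exactly the two (resp.\ four) derivatives of $\bz$ required. In the case $\mu_+ \neq \mu_-$ the amplitude $\widetilde{\omega}$ is determined only implicitly, via a Fredholm-type integral equation, and its regularity cannot be extracted from $\bz$ alone; the extra hypothesis $(u_+ - u_-)\cdot\partial_\alpha\bz \in C^1$ supplies precisely what is missing. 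The main obstacle throughout is the principal-value analysis in the near-singular regime: losing the odd-function cancellation of the leading Taylor term would produce a logarithmic divergence in $m$ that defeats the Gronwall step, so the argument hinges on carefully tracking the $h \mapsto -h$ symmetry in both the direct and reflected kernels.
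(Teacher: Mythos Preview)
Your overall strategy matches the paper's: track $m(t)=\min_\alpha z_2(\alpha,t)$, use the contour equation at a minimizer $\alpha^*$ (where the $c\,\partial_\alpha z_2$ term drops out), and exploit the algebraic identity that extracts the factor $4\,z_2(\alpha^*)\,z_2(s)$ from the difference of the direct and reflected kernels. The reduction of Theorem~\ref{teomuskat} to a bound on $\|\widetilde\omega\|_{C^1}$---via the explicit formula $\widetilde\omega=\gamma\partial_\alpha\kappa+\jump{\rho}g\,\partial_\alpha z_2$ when $\mu_+=\mu_-$, or via the hypothesis $(u_+-u_-)\cdot\partial_\alpha\bz\in C^1$ when $\mu_+\neq\mu_-$---is exactly what the paper does.

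There is, however, a genuine gap in your near-region estimate. Your displayed bound for the ``first-order correction'' is
\[
\int_{-\delta}^\delta \frac{dh}{(\partial_\alpha z_1(\alpha^*))^2 h^2 + 4m^2}\ \le\ \frac{\pi}{2m\,|\partial_\alpha z_1(\alpha^*)|},
\]
which is $O(1/m)$. You then say this ``exactly compensates the prefactor $m$'', which yields $|m'|\le C$, \emph{not} $|m'|\le Cm$ as you assert in the next sentence. And $|m'|\le C$ is useless here: integrating gives only $m(t)\ge m(0)-Ct$, which can vanish in finite time. The inconsistency arises because your displayed integrand has numerator $1$; you have in effect bounded the factor $z_2(s)$ in the numerator by $\|z_2\|_{L^\infty}$, discarding precisely the smallness you need. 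To salvage a differential inequality that closes one must use that near the minimizer $z_2(\alpha^*+h)=m+O(h^2)$ (this requires both $\partial_\alpha z_2(\alpha^*)=0$ and the $C^2$ bound on $\bz$), which inserts an extra factor of order $m+h^2$ into every correction term. A related omission is that you treat only the correction coming from $\partial_\alpha\widetilde\omega$; the Taylor remainders in $z_1$, in $z_2$, and in the two denominators each generate further even contributions that must be handled one by one before any symmetry cancellation can be invoked.

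The paper closes the estimate differently: instead of a fixed cutoff $\delta$ it uses a three-scale decomposition $|s|<m$, $m\le|s|<1$, $|s|\ge1$, performs the symmetrization $s\mapsto -s$ explicitly in each region, and bounds the many resulting cross-terms by hand. The outcome is the weaker inequality $|m'|\le Cm\log(1/m)$ (the logarithm appears in the middle region from $\int_m^1 s^{-1}\,ds$), which after integration gives $m(t)\ge e^{-Ce^{Ct}}$ rather than $m(0)e^{-Ct}$, but still suffices to exclude finite-time contact with the bottom.
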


We do not expect the regularity assumptions in this result to be sharp. For instance, in the case with $\mu_+=\mu_-$, we think that it is enough to have 
$$
\textbf{z}\in C^{1,2+2\text{sgn}(\gamma)}_{t,\alpha},
$$
or even a less strict condition. However, we prefer this statement for the sake of simplicity. 

Furthermore, in addition to the previous result, we are actually able to establish the decay of the interface under certain hypothesis on the physical setting (see Proposition \ref{decayprop}) which seems as a first step towards a finite time singularity result (see also \cite{castro2013breakdown}).

\begin{theorem}\label{teowaves}
Let $(\textbf{z},u_\pm,p_\pm)$ be a smooth solution of the two-phase irrotational Euler problem (equations \eqref{eq for dt z}-\eqref{waterwaves}) such that $\bf{z}$ does not have any self-intersection and verifying
$$
\sup\limits_{\al,\b\in\BR, t\in[0,T]}\f{|\al-\b|}{|\bz(\al)-\bz(\b)|}<\infty.
$$
Then we have that if
$$
\textbf{z}\in C^2([0,T]\times\BR)
$$
and
$$
(u_+-u_-)\cdot \partial_\alpha \textbf{z}\in C^1([0,T]\times \Gamma(t)),
$$
the interface $\textbf{z}$ cannot touch the impervious bottom in $[0,T]$.
\end{theorem}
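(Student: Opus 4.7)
The plan is to track the infimum of $z_2(\al,t)$ along the free boundary and derive a Gronwall-type lower bound that prevents it from vanishing. Set $m(t):=\inf_{\al\in\BR}z_2(\al,t)$. The chord-arc bound and $\bz\in C^2$ allow us to assume the infimum is attained at some $\al_*(t)\in\BR$, where $\pa_\al z_2(\al_*(t),t)=0$. Hence the reparametrization drift $c\,\pa_\al\bz$ does not contribute to the vertical component of \eqref{eq for dt z} at $\al_*(t)$, and the problem reduces to estimating the Birkhoff--Rott-type integral at a critical point of $z_2$.

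The algebraic key is the factorization
$$\f{z_1(\al_*)-z_1(s)}{|\bz(\al_*)-\bz(s)|^2}-\f{z_1(\al_*)-z_1(s)}{|\bz(\al_*)-(z_1(s),-z_2(s))|^2}=\f{4\,z_2(\al_*)\,z_2(s)\,(z_1(\al_*)-z_1(s))}{|\bz(\al_*)-\bz(s)|^2\,|\bz(\al_*)-(z_1(s),-z_2(s))|^2},$$
which follows from $|\bz(\al_*)-(z_1(s),-z_2(s))|^2-|\bz(\al_*)-\bz(s)|^2=4z_2(\al_*)z_2(s)$ and extracts the small factor $z_2(\al_*)=m(t)$. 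Inserting into the vertical component of \eqref{eq for dt z} yields
$$\pa_t z_2(\al_*(t),t)=\f{2m(t)}{\pi}\,\mathrm{p.v.}\!\int_\BR\f{z_2(s)(z_1(\al_*)-z_1(s))\,\tom(s)}{|\bz(\al_*)-\bz(s)|^2\,|\bz(\al_*)-(z_1(s),-z_2(s))|^2}\,ds=:\f{2m(t)}{\pi}I(t),$$
so the argument reduces to a uniform bound $|I(t)|\le C(T)$ on $[0,T]$. Granted this, a standard envelope argument for the Lipschitz function $m$ promotes $\pa_t z_2(\al_*(t),t)\ge -C(T)m(t)$ to $\tfrac{dm}{dt}\ge -C(T)m$ a.e., and Gronwall gives $m(t)\ge m(0)e^{-C(T)T}>0$, ruling out contact with the bottom.

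Away from the diagonal ($|s-\al_*|\ge\d$) the factorized integrand decays like $|s-\al_*|^{-3}$, and the chord-arc constant together with $\|\bz\|_{C^0}$ and $\|\tom\|_{L^\infty}$ suffice to control this piece. Near the diagonal the integrand blows up pointwise like $1/(m(t)|s-\al_*|)$ as $m(t)\to 0$, so no estimate based on absolute values of the integrand can work. However, Taylor-expanding around $\al_*$ using $\pa_\al z_2(\al_*)=0$ and $\bz\in C^2$ splits the integrand into an odd part (in $s-\al_*$) of leading order $1/(s-\al_*)$, whose PV symmetrization vanishes to leading order and leaves only bounded corrections, plus an even part whose integral is uniformly controlled in $m(t)$. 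Combined with $\tom(s)=\tom(\al_*)+\pa_\al\tom(\al_*)(s-\al_*)+o(|s-\al_*|)$---which requires $\tom\in C^1$---this gives $|I(t)|\le C(T)$. In the Euler setting $\tom$ satisfies a coupled quasilinear equation rather than being algebraically determined as in the $\mu_+=\mu_-$ Muskat case, and the hypothesis $(u_+-u_-)\cdot\pa_\al\bz\in C^1([0,T]\times\G(t))$ is exactly what furnishes the required $C^1$ regularity of $\tom$.

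The main obstacle is this near-diagonal PV estimate: one has to convert an apparent pointwise blow-up of size $1/(m(t)|s-\al_*|)$ into a bounded principal value integral, and this is the key step where both $\bz\in C^2$ and $\tom\in C^1$ are consumed in tandem.
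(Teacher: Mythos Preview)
Your approach is essentially the paper's: track $m(t)=\min_\al z_2(\al,t)$, use $\pa_\al z_2(\al_*,t)=0$ to drop the drift term, extract the factor $4z_2(\al_*)z_2(s)$ from the difference of kernels, and control the remaining principal-value integral by symmetrization, consuming exactly $\bz\in C^2$ and $\tom\in C^1$; you also correctly observe that the hypothesis $(u_+-u_-)\cdot\pa_\al\bz\in C^1$ is precisely $\tom\in C^1$ via \eqref{tom}, which is how the paper reduces Theorem~\ref{teowaves} to the cornerstone Theorem~\ref{teo1}.

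One quantitative discrepancy is worth flagging. You assert $|I(t)|\le C(T)$ uniformly in $m$, giving the linear Gronwall $m'\ge -Cm$ and $m(t)\ge m(0)e^{-CT}$. The paper's explicit three-zone decomposition ($|s|<m$, $m\le|s|<1$, $|s|\ge1$) yields only $|J|\le C\,m\log(1/m)$, i.e.\ $|I(t)|\le C\log(1/m(t))$, and hence the weaker double-exponential bound $m(t)\ge e^{-Ce^{Ct}}$. The logarithm appears in the intermediate annulus $m<|s|<1$: after symmetrizing, the paper bounds the resulting integrand crudely by $Cs^{-1}$, and your outline does not make clear how a fixed threshold $\delta$ and Taylor expansion avoid this (the far-field bound $|s-\al_*|^{-3}$ integrated from $\delta$ down to $m$ is not uniform, and the near-field symmetrization over $|s|<\delta$ still has to cross the scale $|s|\sim m$). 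This does not affect the conclusion of the theorem, but the sharper Gronwall you claim would need more than the sketch you give.
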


As before, we do not think that these regularity assumptions are sharp.

These theorems can be understood as \emph{continuation criteria} for the free boundary problems under consideration. Our results generalize the one by Daniel Coutand in \cite{coutand2019finite} in several ways. On the one hand, we give a unitary approach that serves to every free boundary problem in an incompressible and irrotational flow. In particular, it also covers the case of a Muskat flow that, to the best of authors' knowledge, was open. On the other hand, our results improve the criterion on \cite{coutand2019finite} in the sense that the quantities that prevent the vanishing depth scenario are related to the interface and not the bulk of the fluids. In other words, while previously the whole velocity in $\Omega_-$ must satisfy certain smoothness condition, our result only requires the jump of the tangential velocity at the interface to be smooth.

\section{Derivation of the governing equation for the interface}
Let us briefly sketch how to obtain the contour equation formulation of the previous systems. The free boundary is transported by the normal velocity of the fluids. In particular, the evolution of the curve $\G(t)$ is governed by
\beq\label{move of z}
\pa_t\bz\cdot (\pa_\al \bz)^\bot =u_\pm\cdot (\pa_\al \bz)^\bot.
\eeq
Then we introduce the global velocity,
\beqo
v=u_+\chi_{\Om_+}+u_-\chi_{\Om_-}.
\eeqo
The vorticity is defined in the sense of distributions as a scalar function
\beqo
\om=\pa_xv^2-\pa_yv^1.
\eeqo

Note that $\om$ is a measure supported only on $\G(t)$, and we can compute it explicitly. Take any $\phi\in C_c^1(\BR^2_+)$,
\beq
\begin{split}
\int_{\BR^2_+} \om\phi\,dxdy&=\int_{\BR^2_+}(v^1\pa_y\phi-v^2\pa_x\phi)\,dxdy\\
&=\int_{\Om_+} (v^1\pa_y\phi-v^2\pa_x\phi)\,dxdy+\int_{\Om_-}(v^1\pa_y\phi-v^2\pa_x\phi)\,dxdy\\
&=\int_{\BR} \phi (v_--v_+)\cdot (\pa_\al z_1,\pa_\al z_2)\,d\al\\
&=\int_{\BR} \phi\widetilde{\om}\,d\al.
\end{split}
\eeq
with $\widetilde{\om}$ is set as
\beq\label{tom}
\widetilde{\om}=-\jump{v\cdot\pa_\al \bz},
\eeq
where $\jump{F}=F_+-F_-$ denotes the jump of $F$.

Using the Biot-Savart formula for the upper-half plane, we have
\beqo
u(x,y)=\f1{2\pi}\int_{\BR}\left[ \f{(-(y-z_2(s)), x-z_1(s))}{(x-z_1(s))^2+(y-z_2(s))^2}-\f{(-(y+z_2(s)),x-z_1(s))}{(x-z_1(s))^2+(y+z_2(s))^2} \right]\tom(s)\,ds
\eeqo
Then on $\G(t)$, using the Plemelj formula, we have
\beq\label{formula for u}
\begin{split}
u_\pm(\bz(\al))=& \f1{2\pi}p.v.\int_\BR \bigg[ \f{(\bz(\al)-\bz(s))^\bot}{|\bz(\al)-\bz(s)|^2}-\f{(\bz(\al)-(z_1(s),-z_2(s)))^\bot}{|\bz(\al)-(z_1(s),-z_2(s))|^2} \bigg]\tom(s)\,ds \mp\f12 \f{\pa_\al \bz}{|\pa_\al \bz|^2}\tom(\al)
\end{split}
\eeq
Substituting \eqref{formula for u} into \eqref{move of z} gives
\beq\label{eq for dt zv2}
\begin{split}
\pa_t \bz(\al,t)=& \f1{2\pi}p.v.\int_\BR \bigg[ \f{(\bz(\al)-\bz(s))^\bot}{|\bz(\al)-\bz(s)|^2}-\f{(\bz(\al)-(z_1(s),-z_2(s)))^\bot}{|\bz(\al)-(z_1(s),-z_2(s))|^2} \bigg]\tom(s)\,ds+c(\al,t)\pa_\al \bz.
\end{split}
\eeq
Here the quantity $c(\al,t)$ represents the change of parametrization along $\bz$.
In particular,
\beq\label{eq for dt z2}
\pa_t z_2=\f1{2\pi}p.v.\int_\BR \bigg[ \f{z_1(\al)-z_1(s)}{|\bz(\al)-\bz(s)|^2}-\f{z_1(\al)-z_1(s)}{|\bz(\al)-(z_1(s),-z_2(s))|^2} \bigg] \tom(s) \,ds+c(\al,t)\pa_\al z_2.
\eeq
The particular expression for $\tom$ depends on the model under consideration. Namely,

\begin{itemize}
\item in the case of the Muskat problem, $\tom$ satisfies the following nonlinear integral equation \cite{cordoba2011interface}
\begin{multline}\label{muskat}
\frac{\jump{\mu}}{2\pi}\text{p.v.}\int_\BR \bigg[ \f{z_1(\al)-z_1(s)}{|\bz(\al)-\bz(s)|^2}-\f{z_1(\al)-z_1(s)}{|\bz(\al)-(z_1(s),-z_2(s))|^2} \bigg] \tom(s) \,ds\cdot \pa_\al\bz(\alpha)\\
+\left(\frac{\mu_-+\mu_+}{2}\right)\tom(\alpha)=
\gamma\pa_\al\kappa+\jump{\rho}g\pa_\al z_2(\alpha).
\end{multline}
If we assume that the viscosities are equal, the previous equation simplifies to
$$
\mu\tom(\alpha)=
\gamma\pa_\al\kappa+\jump{\rho}g\pa_\al z_2(\alpha).
$$
In this latter case and without lossing generality we will take $\jump{\mu}=0$.
\item in the case of the irrotational Euler equations, following \cite{cordoba2010interface}, we have that $\tom$ satisfies
\begin{align}
\partial_t \tom(\alpha)&=-\partial_\alpha\bigg{[}
 \frac{\frac{\jump{\rho}}{\rho_++\rho_-}}{4\pi^2} \left|\int_\BR \bigg[ \f{(\bz(\al)-\bz(s))^\bot}{|\bz(\al)-\bz(s)|^2}-\f{(\bz(\al)-(z_1(s),-z_2(s)))^\bot}{|\bz(\al)-(z_1(s),-z_2(s))|^2} \bigg]\tom(s)\,ds\right|^2
 -\frac{\frac{\jump{\rho}}{\rho_++\rho_-}}{4} \frac{\tom^2}{|{\partial_\alpha} \bz|^2}
\nonumber\\
&\quad \qquad
+  \frac{\frac{\jump{\rho}}{\rho_++\rho_-}}{\pi}  \int_\BR \bigg[ \f{(\bz(\al)-\bz(s))^\bot}{|\bz(\al)-\bz(s)|^2}-\f{(\bz(\al)-(z_1(s),-z_2(s)))^\bot}{|\bz(\al)-(z_1(s),-z_2(s))|^2} \bigg]\tom(s)\,ds  \cdot \partial_ \alpha \bz(\alpha) c( \alpha)
\nonumber \\
&\qquad \qquad
-c( \alpha) \tom( \alpha )  - \frac{2\gamma\kappa}{\rho^++\rho^-}  -2 \frac{\jump{\rho}}{\rho_++\rho_-} g z_2 \bigg{]} \nonumber \\
& \qquad \qquad
+\frac{\frac{\jump{\rho}}{\rho_++\rho_-}}{\pi} \partial_t\bigg{[}
 \int_\BR \bigg[ \f{(\bz(\al)-\bz(s))^\bot}{|\bz(\al)-\bz(s)|^2}-\f{(\bz(\al)-(z_1(s),-z_2(s)))^\bot}{|\bz(\al)-(z_1(s),-z_2(s))|^2} \bigg]\tom(s)\,ds  \cdot \partial_ \alpha \bz(\alpha)
\bigg{]} \,.
\label{waterwaves}
\end{align}

\end{itemize}

\section{The cornerstone}
The results in this paper are mainly obtained as a consequence of the following cornerstone theorem:
\begin{theorem}\label{teo1}
Let $(\bz, \tom)$ be a couple satisfying \eqref{eq for dt z} on some time interval $[0,T]$. Assume also that there is a constant $A>0$ such that
\begin{align}
\label{regularity condition}&\|\bz(\al,t)-(\al,1)\|_{C^2(\BR\times [0,T])}\leq A,\\
\label{regularity condition2}&\|\tom\|_{C^1(\BR\times [0,T])}\leq A,\\
\label{behavior at infty}& \lim\limits_{|\al|\ri\infty} |\bz(\al,t)-(\al,1)|=0, \;\;\forall t\in[0,T],\\
\label{chord-arc condition}& \sup\limits_{\al,\b\in\BR, t\in[0,T]}\f{|\al-\b|}{|\bz(\al)-\bz(\b)|}\leq A.
\end{align}

 Then there is a constant $C(A)$ such that
\beqo
\min\limits_{\al\in\BR} |z_2(\al,t)|\geq e^{-Ce^{Ct}}, \quad \forall t\in [0,T],
\eeqo
which means $\bz(\al,t)$ cannot touch the bottom in finite time.
\end{theorem}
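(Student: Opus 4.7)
The plan is to reduce the theorem to the scalar ODE inequality
\beqo
|m'(t)| \leq C(A)\, m(t)\bigl(1+\log(1/m(t))\bigr), \qquad m(t):=\min_{\al\in\BR}z_2(\al,t),
\eeqo
from which the standard Gronwall estimate for $f(t):=-\log m(t)$ yields $f(t)\leq (f(0)+1)e^{C(A)t}$, equivalently $m(t)\geq e^{-Ce^{Ct}}$ as claimed. By \eqref{behavior at infty} and continuity the minimum is attained at some $\al_m(t)$; since $\partial_\al z_2(\al_m,t)=0$, the reparametrization term in \eqref{eq for dt z2} drops at the minimum and
\beqo
m'(t) = \f{1}{2\pi}\,\mathrm{p.v.}\int_\BR (z_1(\al_m)-z_1(s))\Bigl[\f{1}{|\bz(\al_m)-\bz(s)|^2}-\f{1}{|\bz(\al_m)-(z_1(s),-z_2(s))|^2}\Bigr]\tom(s)\,ds.
\eeqo

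The first manipulation is the exact algebraic identity
\beqo
\f{1}{|\bz(\al_m)-\bz(s)|^2}-\f{1}{|\bz(\al_m)-(z_1(s),-z_2(s))|^2}=\f{4m\,z_2(s)}{|\bz(\al_m)-\bz(s)|^2\,|\bz(\al_m)-(z_1(s),-z_2(s))|^2},
\eeqo
which makes the linear factor of $m$ explicit and converts the sign-changing kernel into a positive weight. To exploit the principal-value cancellation I would then introduce the half-plane Green-function potential
\beqo
\Phi(s):=\f{1}{2}\log\f{|\bz(\al_m)-(z_1(s),-z_2(s))|^2}{|\bz(\al_m)-\bz(s)|^2}\geq 0
\eeqo
and observe the identity $\partial_s z_1(s)\cdot K^*(s) = \partial_s\Phi(s) - \partial_s z_2(s)\cdot R(s)$, where $K^*$ is the bracket inside the formula for $m'(t)$ and $R(s)$ is the residual arising from the $z_2$-derivative contributions to $\partial_s\Phi$. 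Combining the two summands defining $R$ yields
\beqo
R(s) = \f{2m\bigl[(z_1(\al_m)-z_1(s))^2 + m^2 - z_2(s)^2\bigr]}{|\bz(\al_m)-\bz(s)|^2\,|\bz(\al_m)-(z_1(s),-z_2(s))|^2},
\eeqo
whose numerator carries an explicit factor of $2m$, so that chord-arc \eqref{chord-arc condition} delivers the pointwise bound $|R(s)|\leq C(A)\,m/((s-\al_m)^2+m^2)$.

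Dividing by $\partial_s z_1$ and integrating by parts --- the points where $\partial_s z_1$ vanishes are handled by a local reparametrization by the other coordinate, using $(\partial_s z_1)^2+(\partial_s z_2)^2\geq 1/A^2$ from \eqref{chord-arc condition} --- converts $\int \tom K^*\,ds$ into $-\int \Phi\,\partial_s(\tom/\partial_s z_1)\,ds-\int(\tom/\partial_s z_1)\,\partial_s z_2\,R\,ds$. The boundary terms vanish because $\Phi(s)\to 0$ like $1/s^2$ at infinity by \eqref{behavior at infty}, and $\partial_s(\tom/\partial_s z_1)$ is uniformly $C(A)$ by \eqref{regularity condition}-\eqref{regularity condition2}. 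Two integral bounds then close the argument: first, $\int_\BR |\Phi(s)|\,ds \leq C(A)\,m$, proved by splitting into the scales $|s-\al_m|\lesssim m$ (where $\Phi$ has an integrable logarithmic singularity contributing $O(m)$), $m\lesssim|s-\al_m|\lesssim 1$ (where $\log(1+x)\leq x$ with $x\lesssim m/(s-\al_m)^2$ combines with chord-arc to give $O(m)$), and $|s-\al_m|\gtrsim 1$ (bounded $z_2$ and chord-arc decay, $O(m)$); second, $\int_\BR |\partial_s z_2(s)|\cdot |R(s)|\,ds \leq C(A)\,m(1+\log(1/m))$, using the $R$-bound together with $|\partial_s z_2(s)|\leq C(A)|s-\al_m|$ near $\al_m$ (from $\partial_\al z_2(\al_m)=0$ and $\|\partial_\al^2 z_2\|_\infty\leq A$), so that $\int |u|\,m/(u^2+m^2)\,du\sim m\log(1/m)$.

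The hard part of the plan will be the bookkeeping for these two estimates. The bound $\int|\Phi|\lesssim m$ needs both the integrability of the logarithmic singularity near $\al_m$ and the quadratic Green-function decay at infinity, while the pointwise control of $R$ relies on the algebraic cancellation $(m+z_2(s))|\bz(\al_m)-\bz(s)|^2+(m-z_2(s))|\bz(\al_m)-(z_1(s),-z_2(s))|^2=2m[(z_1(\al_m)-z_1(s))^2+m^2-z_2(s)^2]$, which is not visible in the two summands individually. Once both are in hand, they combine to $|m'(t)|\leq C(A)\,m(t)(1+\log(1/m(t)))$, and Gronwall on $-\log m(t)$ yields the claimed doubly exponential lower bound on $[0,T]$.
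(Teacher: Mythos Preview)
Your route is genuinely different from the paper's. The authors never integrate by parts; they split $J=J_m+J_1+J_\infty$ according to the scales $|s|<m$, $m\le|s|<1$, $|s|\ge 1$ and extract cancellation by symmetrizing each piece in $s\mapsto -s$, which is robust but purely computational. Your idea---recognizing $(\partial_s z_1)\,(z_1(\al_m)-z_1(s))K^*=\partial_s\Phi-(\partial_s z_2)R$ and pushing the derivative onto $\tom/\partial_s z_1$---is cleaner when it applies, and your bounds $\int|\Phi|\lesssim m$ and $\int|\partial_s z_2|\,|R|\lesssim m\log(1/m)$ are essentially correct. One small imprecision: in the intermediate range $m\lesssim|s-\al_m|\lesssim 1$ the crude bound $x\lesssim m/(s-\al_m)^2$ only integrates to $O(1)$; you need the Taylor estimate $z_2(s)-m\le\tfrac{A}{2}(s-\al_m)^2$ (which you already invoke for the $R$-term) to get $x\lesssim m^2/(s-\al_m)^2+m$, whose integral over $[m,1]$ is indeed $O(m)$.

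The genuine gap is the division by $\partial_s z_1$. The hypotheses allow $A\ge 1$, so $|\partial_\al z_1-1|\le A$ does not force $\partial_\al z_1>0$, and curves with vertical tangents are explicitly within scope (cf.\ the remark following Proposition~\ref{decayprop}). Near a zero of $\partial_s z_1$ the quantity $\partial_s(\tom/\partial_s z_1)$ is certainly not ``uniformly $C(A)$'': already a simple zero makes it non-integrable. Your proposed fix, ``local reparametrization by the other coordinate'', does not work as stated, because the quantity you must control is $\int\tom\,(z_1(\al_m)-z_1(s))K^*\,ds$ with the measure $ds$, not the pullback of a $1$-form along $\Gamma$; switching the dummy variable to $z_2$ inserts a Jacobian $1/\partial_s z_2$ without cancelling the $1/\partial_s z_1$. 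The observation you are missing is that at the minimum $\partial_s z_2(\al_m)=0$, so chord-arc forces $|\partial_s z_1(\al_m)|\ge 1/A$; by $C^1$-continuity, $|\partial_s z_1|\ge 1/(2A)$ on a fixed interval $|s-\al_m|<\delta(A)$. Perform your integration by parts only there (the boundary contributions are $O(m)$ since $\Phi(\al_m\pm\delta)\lesssim m/\delta^2$), and bound the complement directly from $ (z_1(\al_m)-z_1(s))K^*=4mz_2(s)(z_1(\al_m)-z_1(s))/[(a^2+b^2)(a^2+c^2)]$ together with chord-arc, which gives a pointwise $O(m|s-\al_m|^{-3})$ and hence an $O(m/\delta^2)$ contribution. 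The paper's symmetrization sidesteps this issue entirely.
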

\begin{proof}[Proof of Theorem \ref{teo1}]
The proof has the same flavour as the one in \cite{gancedo2014absence}. The argument works by contradiction. Let us assume that the solution satisfies
$$
\bz\in C^2([0,T]\times\mathbb{R})
$$
and define
\beq\label{def of h}
m(t)=z_2(\al_t,t)=\min\limits_{\al\in \BR} z_2(\al,t).
\eeq
Using the regularity of $\bf{z}$, we have that $m(t)$ is a Lipschitz function and, as a consequence, it is almost everywhere differentiable with derivative given by
$$
\frac{d}{dt}m(t)=\partial_t z_2(\alpha_t,t).
$$
From \eqref{eq for dt z2} and $\pa_\al z_2(\al_t,t)=0$ we obtain that
\beq\label{eq of h}
\begin{split}
\f{d}{dt} m(t)=\f{1}{2\pi}p.v.\int_\BR \bigg[ \f{z_1(\al_t)-z_1(s)}{|\bz(\al_t)-\bz(s)|^2}-\f{z_1(\al_t)-z_1(s)}{|\bz(\al_t)-(z_1(s),-z_2(s))|^2} \bigg] \tom(s) \,ds
\end{split}
\eeq
We define
\begin{align*}
J&:= p.v.\int_\BR \bigg[ \f{z_1(\al)-z_1(s)}{|\bz(\al)-\bz(s)|^2}-\f{z_1(\al)-z_1(s)}{|\bz(\al)-(z_1(s),-z_2(s))|^2} \bigg] \tom (s)\,ds
\end{align*}
The goal is to show that
\beqo
|J|\leq C m\log\left(\f{1}{m}\right).
\eeqo

We decompose it as follows:
$$
J=J_m+J_1+J_\infty,
$$
with
\begin{align*}
J_m&=  p.v.\int_{|s|<m}\bigg[ \f{\tom(\al_t-s)(z_1(\al_t)-z_1(\al_t-s))}{|\bz(\al_t)-\bz(\al_t-s)|^2}-\f{\tom(\al_t-s)(z_1(\al_t)-z_1(\al_t-s))}{|\bz(\al_t)-(z_1(\al_t-s),-z_2(\al_t-s))|^2}\bigg]\,ds \\
J_1&=  \int_{m\leq |s|<1} \bigg[ \f{\tom(\al_t-s)(z_1(\al_t)-z_1(\al_t-s))}{|\bz(\al_t)-\bz(\al_t-s)|^2}-\f{\tom(\al_t-s)(z_1(\al_t)-z_1(\al_t-s))}{|\bz(\al_t)-(z_1(\al_t-s),-z_2(\al_t-s))|^2}\bigg]\,ds \\
J_\infty&= p.v.\int_{|s|\geq 1} \bigg[ \f{\tom(\al_t-s)(z_1(\al_t)-z_1(\al_t-s))}{|\bz(\al_t)-\bz(\al_t-s)|^2}-\f{\tom(\al_t-s)(z_1(\al_t)-z_1(\al_t-s))}{|\bz(\al_t)-(z_1(\al_t-s),-z_2(\al_t-s))|^2}\bigg]\,ds \bigg).
\end{align*}

For any function $g$ and points $a,b\in \BR$, we introduce the following abbreviations
\beqo
\d_g(a,b):=g(a)-g(b),\qquad \s_g(a,b):=g(a)+g(b).
\eeqo
Here $g$ is either a scalar function or a vector function.

For the term $J_m$, we first write
$$
J_m=J_m^1+J_m^2
$$
with
\begin{align*}
J_m^1= &  p.v.\int_{0<|s|<m}  \f{\tom(\al_t-s)(z_1(\al_t)-z_1(\al_t-s))}{|\bz(\al_t)-\bz(\al_t-s)|^2}\,ds\\
J_m^2=&- p.v. \int_{0<|s|<m} \f{\tom(\al_t-s)(z_1(\al_t)-z_1(\al_t-s))}{|\bz(\al_t)-(z_1(\al_t-s),-z_2(\al_t-s))|^2}\,ds.
\end{align*}
Then we estimate
\beq\label{estimate of Jh1}
\begin{split}
J_m^1=&\lim\limits_{\e\ri 0}\int_\e^m \left[ \f{\tom(\al_t-s)(z_1(\al_t)-z_1(\al_t-s))}{|\bz(\al_t)-\bz(\al_t-s)|^2}+\f{\tom(\al_t+s)(z_1(\al_t)-z_1(\al_t+s))}{|\bz(\al_t)-\bz(\al_t+s)|^2}  \right]\,ds\\
=&\lim\limits_{\e\ri 0}\int_\e^m  \bigg[\f{|\d_{\bz}(\al_t,\al+s)|^2(\d_{z_1}(\al_t,\al_t-s))\tom(\al_t-s)}{|\d_{\bz}(\al_t,\al_t-s)|^2|\d_{\bz}(\al_t,\al_t+s)|^2}\\
&\qquad\qquad\quad +\f{|\d_{\bz}(\al_t,\al_t-s)|^2(\d_{z_1}(\al_t,\al_t+s))\tom(\al_t+s)}{|\d_{\bz}(\al_t,\al_t-s)|^2|\d_{\bz}(\al_t,\al_t+s)|^2}\bigg]\,ds \\
=& \lim\limits_{\e\ri 0}\int_\e^m \bigg[ \f{\d_{\tom}(\al_t-s,\al_t+s)|\d_{\bz}(\al_t,\al_t+s)|^2\d_{z_1}(\al_t,\al_t-s)}{|\d_{\bz}(\al_t,\al_t-s)|^2|\d_{\bz}(\al_t,\al_t+s)|^2} \\
& \qquad\qquad +   \f{\tom(\al_t+s)\left( |\d_{\bz}(\al_t,\al_t+s)|^2\d_{z_1}(\al_t,\al_t-s)+|\d_{\bz}(\al_t,\al_t-s)|^2\d_{z_1}(\al_t,\al_t+s) \right)}{|\d_{\bz}(\al_t,\al_t-s)|^2|\d_{\bz}(\al_t,\al_t+s)|^2}  \bigg]\,ds.
\end{split}
\eeq
Due to \eqref{regularity condition}, \eqref{chord-arc condition} together with the regularity of $\tom$ and the fact that
$$
\partial_\alpha z_2(\al_t,t)=0,
$$
we have
\begin{align*}
|\d_{\tom}(\al_t-s,\al_t+s)|&\leq 2\|\tom\|_{C^1}s,\\
|\d_{\bz}(\al_t,\al_t+ s)|^2 &\geq \f{1}{A^2} s^2,\\
|\d_{\bz}(\al_t,\al_t- s)|^2 &\geq \f{1}{A^2} s^2,\\
|\d_{z_1}(\al_t,\al_t+ s)|&\leq \|z_1\|_{C^1} s,\\
|\d_{z_1}(\al_t,\al_t- s)|&\leq \|z_1\|_{C^1} s,\\
|\d_{z_2}(\al_t,\al_t+ s)|&\leq \f12\|z_2\|_{C^2}s^2\\
|\d_{z_2}(\al_t,\al_t- s)|&\leq \f12\|z_2\|_{C^2}s^2\\
|\d_{z_1}(\al_t,\al_t+s)+\d_{z_1}(\al_t,\al_t-s)|&\leq \|z_1\|_{C^2}s^2.
\end{align*}
Furthermore, we compute
\begin{multline*}
|\d_{\bz}(\al,\al+s)|^2\d_{z_1}(\al,\al-s)+|\d_{\bz}(\al,\al-s)|^2\d_{z_1}(\al,\al+s) \\
= \d_{z_1}(\al,\al+s)\d_{z_1}(\al,\al-s)(\d_{z_1}(\al,\al+s)+\d_{z_1}(\al,\al-s))\\
 +\d_{z_2}(\al,\al+s)^2\d_{z_1}(\al,\al-s)+\d_{z_2}(\al,\al-s)^2\d_{z_1}(\al,\al+s).
\end{multline*}
Substituting all these estimates into \eqref{estimate of Jh1}, it follows that
\beq\label{estimate of Jh1 continue}
\begin{split}
|J_m^1|&\leq C\int_0^m \left[ \f{(\|\tom\|_{C^1}\|z_1\|_{C^1})s^2}{\f{1}{A^2}s^2}+\f{\|\tom\|_{C^0}(\|z_1\|_{C_2}^3s^4+\|z_2\|_{C^2}^2\|z_1\|_{C_1}s^5)}{\f{1}{A^4}s^4}\right]\,ds\\
&\leq C(A, \|\tom\|_{C^1}, \|\bz\|_{C^2}) m.
\end{split}
\eeq

Similarly, for $J_m^2$ we compute
\beq\label{estimate of Jh2}
\begin{split}
J_m^2=&\lim\limits_{\e\ri 0}\int_\e^m \left[ \f{\tom(\al_t-s)\d_{z_1}(\al_t,\al_t-s)}{\d_{z_1}(\al_t,\al_t-s)^2+\s_{z_2}(\al_t,\al_t-s)^2}+\f{\tom(\al_t+s)\d_{z_1}(\al_t,\al_t+s)}{\d_{z_1}(\al_t,\al_t+s)^2+\s_{z_2}(\al_t,\al_t+s)^2}  \right]\,ds\\
=& \lim\limits_{\e\ri 0}\int_\e^m \bigg[ \f{\d_{\tom}(\al_t-s,\al_t+s)(\d_{z_1}(\al_t,\al_t+s)^2+\s_{z_2}(\al_t,\al_t+s)^2)\d_{z_1}(\al_t,\al_t-s)}{(\d_{z_1}(\al_t,\al_t+s)^2+\s_{z_2}(\al_t,\al_t+s)^2)(\d_{z_1}(\al_t,\al_t-s)^2+\s_{z_2}(\al_t,\al_t-s)^2)} \\
&  \qquad\qquad +   \f{\tom(\al_t+s) \d_{z_1}(\al_t,\al_t-s)(\d_{z_1}(\al_t,\al_t+s)^2+\s_{z_2}(\al_t,\al_t+s)^2)}{(\d_{z_1}(\al_t,\al_t+s)^2+\s_{z_2}(\al_t,\al_t+s)^2)(\d_{z_1}(\al_t,\al_t-s)^2+\s_{z_2}(\al_t,\al_t-s)^2)}\\
&\qquad\qquad +\f{\tom(\al_t+s)\d_{z_1}(\al_t,\al_t+s)(\d_{z_1}(\al_t,\al_t-s)^2+\s_{z_2}(\al_t,\al_t-s)^2) }{(\d_{z_1}(\al_t,\al_t+s)^2+\s_{z_2}(\al_t,\al_t+s)^2)(\d_{z_1}(\al_t,\al_t-s)^2+\s_{z_2}(\al_t,\al_t-s)^2)}  \bigg]\,ds.
\end{split}
\eeq

We can write
\begin{multline}\label{Jh2 2nd term}
\d_{z_1}(\al_t,\al_t-s)(\d_{z_1}(\al_t,\al_t+s)^2\\
+\s_{z_2}(\al_t,\al_t+s)^2)+\d_{z_1}(\al_t,\al_t+s)(\d_{z_1}(\al_t,\al_t-s)^2+\s_{z_2}(\al_t,\al_t-s)^2)\\
= (\d_{z_1}(\al_t,\al_t-s)+\d_{z_1}(\al_t,\al_t+s))\d_{z_1}(\al_t,\al_t-s)\d_{z_1}(\al_t,\al_t+s)\\
 + \d_{z_1}(\al_t,\al_t-s)(\s_{z_2}(\al_t,\al_t+s)^2-(2z_2(\al_t))^2)\\
 +\d_{z_1}(\al_t,\al_t+s)(\s_{z_2}(\al_t,\al_t-s)^2-(2z_2(\al_t))^2)\\
 + (\d_{z_1}(\al_t,\al_t-s)+\d_{z_1}(\al_t,\al_t+s))(2z_2(\al_t))^2.
\end{multline}
We also need the following estimates, valid for $0<s<m$,
\begin{align}
\label{Jh2 a} & \qquad\qquad |\s_{z_2}(\al_t,\al_t\pm s)|\geq 2m,\\
\label{Jh2 b} & |\s_{z_2}(\al_t,\al_t\pm s)^2-(2z_2(\al_t))^2|\leq \|z_2\|_{C^2} s^2(3m+ \f12\|z_2\|_{C^2}s^2)\leq \|z_2\|_{C^2} 4s^2m,
\end{align}
when $m$ is sufficiently small. Therefore, substituting \eqref{Jh2 2nd term}, \eqref{Jh2 a} and \eqref{Jh2 b} into \eqref{estimate of Jh2}, we have
\beq\label{estimate of Jh2 continue}
\begin{split}
|J_m^2| \leq &\left|\int_0^m \f{\|\tom\|_{C^1}\|z_1\|_{C^1}s^2}{4m^2}\,ds\right| \\
&\quad +\left| \int_0^m \f{ \|\tom\|_{C^0}(\|z_1\|_{C^2}^3s^4+ 8\|z_1\|_{C^1}\|z_2\|_{C^2}ms^3)+4\|z_1\|_{C^2} s^2 m^2  }{16m^4}\,ds \right|\\
\leq &C(\|\tom\|_{C^1}, \|\bz\|_{C^2}) m.
\end{split}
\eeq
Collecting \eqref{estimate of Jh1 continue} and \eqref{estimate of Jh2 continue} we conclude that
\beq\label{estimate of Jh}
|J_m|\leq C(A, \|\tom\|_{C^1}, \|\bz\|_{C^2})m.
\eeq

For the term $J_1$, for the sake of convenience, we define the following quantity
\beq\label{def of P}
P(\al,s):=|\bz(\al)-\bz(s)|^2|\bz(\al)-(z_1(s),-z_2(s))|^2
\eeq
Then we write
\beq\label{compute J1}
\begin{split}
J_1=& \int_{m<|\al_t-s|<1} \left[ \f{1}{|\bz(\al_t)-\bz(s)|^2}-\f{1}{(z_1(\al_t)-z_1(s))^2+(z_2(\al_t)+z_2(s))^2} \right]\tom(s)\d_{z_1}(\al_t,s)\,ds\\
=& \int_{m\leq |s|\leq 1} \f{4mz_2(\al_t-s)\d_{z_1}(\al_t,\al_t-s)}{P(\al_t,\al_t-s)}\tom(\al_t-s)    \,ds\\
=&4m\int_m^1 \f{z_2(\al_t-s)\d_{z_1}(\al_t,\al_t-s)}{P(\al_t,\al_t-s)}\d_{\tom}(\al_t-s,\al_t+s)\,ds\\
&+ 4m\int_m^1 \tom(\al_t+s)\left[ \f{z_2(\al_t-s)\d_{z_1}(\al_t,\al_t-s)}{P(\al_t,\al_t-s)}+\f{z_2(\al_t+s)\d_{z_1}(\al_t,\al_t+s)}{P(\al_t,\al_t+s)} \right]\,ds\\
=&J_1^1+J_1^2.
\end{split}
\eeq
We have the following simple estimates
\begin{align}\label{est z_2}
&z_2(\al-s)\leq m+\|z_2\|_{C^1}s\leq s(1+\|z_2\|_{C^1}),\\
\label{est P} &\f{1}{A^4} s^4\leq  P(\al,\al\pm s)\leq C(\|\bz\|_{C^1}) s^4,
\end{align}
when $m\leq s\leq 1$. It follows that
\beq\label{estimate of J11}
|J_1^1|\leq 4m(1+\|z_2\|_{C^1})\|z_1\|_{C_1}\|\tom\|_{C^1}A^4 \int_m^1\f{s^3}{s^4}\,ds\leq C(A, \|\bz\|_{C^1}, \|\tom\|_{C^1}) m\log{\f{1}{m}}.
\eeq

For $J_1^2$, as
$$
P(\al_t,\al_t\pm s)\geq \f{1}{A^4}s^4,
$$
it suffices to estimate
$$z_2(\al_t-s)\d_{z_1}(\al_t,\al_t-s)P(\al_t,\al_t+s)+ z_2(\al_t+s)\d_{z_1}(\al_t,\al_t+s)P(\al_t,\al_t-s).
$$
We have that
\begin{multline}\label{estimate of J12}
|z_2(\al_t-s)\d_{z_1}(\al_t,\al_t-s)P(\al_t,\al_t+s)+z_2(\al_t+s)\d_{z_1}(\al_t,\al_t+s)P(\al_t,\al_t-s)|\\
\leq |(\d_{z_1}(\al_t,\al_t-s)+\d_{z_1}(\al_t,\al_t+s))z_2(\al_t-s)P(\al_t,\al_t+s)|\\
 + |\d_{z_1}(\al_t,\al_t+s)[z_2(\al_t+s)P(\al_t,\al_t-s)-z_2(\al_t-s)P(\al_t,\al_t+s)]|\\
\leq C(\|\bz\|_{C^2})s^7+ |\d_{z_1}(\al_t,\al_t+s)\d_{z_2}(\al_t+s,\al_t-s)P(\al_t,\al_t-s)|\\
+|\d_{z_1}(\al_t,\al_t+s)z_2(\al_t-s)(P(\al_t,\al_t-s)-P(\al_t,\al_t+s))|\\
\leq C(\|\bz\|_{C^2})s^7+ C(\|\bz\|_{C^2})s^2 |P(\al,\al-s)-P(\al,\al+s)|,
\end{multline}
where we have used $m<s$. Now the only problem left becomes the estimation of
$$|P(\al_t,\al_t-s)-P(\al_t,\al_t+s)|.
$$ By definition

\begin{equation*}
\begin{split}
|P(\al_t,\al_t-s)-P(\al_t,\al_t+s)|=& \bigg| (\d_{z_1}(\al_t,\al_t-s)^2+\d_{z_2}(\al_t,\al_t-s)^2) (\d_{z_1}(\al_t,\al_t-s)^2 +\s_{z_2}(\al_t,\al_t-s)^2)\\
&-(\d_{z_1}(\al_t,\al_t+s)^2+\d_{z_2}(\al_t,\al_t+s)^2) (\d_{z_1}(\al_t,\al_t+s)^2 +\s_{z_2}(\al_t,\al_t+s)^2)\bigg|\\
=& P_1+P_2+P_3+P_4.
\end{split}
\end{equation*}
with
\begin{align*}
P_1=& | \d_{z_1}(\al_t,\al_t-s)^4-\d_{z_1}(\al_t,\al_t+s)^4|\\
P_2=&|\d_{z_1}(\al_t,\al_t-s)^2\d_{z_2}(\al_t,\al_t-s)^2-\d_{z_1}(\al_t,\al_t+s)^2\d_{z_2}(\al_t,\al_t+s)^2|\\
P_3=& |\d_{z_1}(\al_t,\al_t-s)^2\s_{z_2}(\al_t,\al_t-s)^2-\d_{z_1}(\al_t,\al_t+s)^2\s_{z_2}(\al_t,\al_t+s)^2|\\
P_4=&|\d_{z_2}(\al_t,\al_t-s)^2\s_{z_2}(\al_t,\al_t-s)^2-\d_{z_2}(\al_t,\al_t+s)^2\s_{z_2}(\al_t,\al_t+s)^2|.
\end{align*}

For the term $P_2$ and $P_4$, since they contain the term $\d_{z_2}(\al,\al\pm s)^2$ which is of order $s^4$, we get that
\beqo
P_2+P_4\leq C(\|z_2\|_{C^2}, \|z_1\|_{C^1}) s^6.
\eeqo
For $P_1$ we have that
\begin{align*}
P_1& =|\d_{z_1}(\al,\al-s)^2+\d_{z_1}(\al,\al+s)^2|\cdot|\d_{z_1}(\al,\al-s)^2-\d_{z_1}(\al,\al+s)^2| \\
&\leq 2\|z_1\|_{C^1}^2 s^2 |\d_{z_1}(\al,\al-s)+\d_{z_1}(\al,\al+s)|\d_{z_1}(\al-s,\al+s)|\\
&\leq C(\|z_1\|_{C^2}) s^5.
\end{align*}
Similarly, for $P_3$ we estimate that
\begin{align*}
P_3 &=|(\d_{z_1}(\al,\al-s)^2-\d_{z_1}(\al,\al+s)^2)\s_{z_2}(\al,\al-s)^2|\\
&\quad + |\d_{z_1}(\al,\al+s)^2(\s_{z_2}(\al,\al-s)^2-\s_{z_2}(\al,\al+s)^2)|\\
&\leq C(\|z_1\|_{C^2},\|z_2\|_{C^1})s^5+ C(\|z_2\|_{C^2},\|z_1\|_{C^1}) s^5.
\end{align*}

Substituting all the estimates of $P_i$ into \eqref{estimate of J12} and using \eqref{compute J1}, we conclude that
\beq\label{estimate of J12 continue}
|J_1^2|\leq 4m \|\tom\|_{C^0} C(\|\bz\|_{C^2})A^8 \int_m^1 \f{s^7}{s^8}\,ds\leq C(A, \|\bz\|_{C^2}, \|\tom \|_{C^0}) m\log{\f1m}.
\eeq
Estimate \eqref{estimate of J12 continue} together with \eqref{estimate of J11} leads to
\beq\label{estimate of J1}
|J_1|\leq C(A,\|\bz\|_{C^2},\|\tom\|_{C^1})m\log{\f1m}.
\eeq

Finally, for the term $J_\infty$, we compute
\beq\label{estimate of Jinfty}
\begin{split}
|J_\infty|&=\left|p.v.\int_{|\al_t-s|>1}  \f{4\d_{z_1}(\al_t,s)\tom(s)z_2(\al_t)z_2(s)}{P(\al_t,s)} \,ds\right|\\
&\leq 4m\|z_1\|_{C^1}\|\tom\|_{C^0}\|z_2\|_{L^\infty}A^4 \int_{|\al_t-s|>1} \f{|\al_t-s|}{|\al_t-s|^4}\,ds\\
&\leq \left( 2 A^4 \|z_1\|_{C^1}\|\tom\|_{C^0}\|z_2\|_{L^\infty} \right)m.
\end{split}
\eeq

Finally, \eqref{estimate of Jh}, \eqref{estimate of J1} and \eqref{estimate of Jinfty} together imply that
\beqo
\left|\f{d}{dt} m\right|\leq C(\|\bz\|_{C^2},\|\tom\|_{C^1},A)m\log{\f1m},
\eeqo
where the constant $C$ only depends on $\|\bz\|_{C^2}$, $\|\tom\|_{C^1}$ and the constant $A$ in the chord-arc condition \eqref{chord-arc condition}. It follows that
\beqo
m(t)\geq e^{-Ce^{Ct}},
\eeqo
which completes the proof of the theorem.
\end{proof}

\section{The Muskat problem}
We start this section proving that the solution to the Muskat problem in the Rayleigh-Taylor unstable regime where the heavier fluid is on top of the lighter fluid can actually approach the impervious bottom. Namely, we have the following result
\begin{proposition}\label{decayprop}
Let $(z,\tom)$ be an analytical solution of the Muskat problem \eqref{eq for dt z}-\eqref{muskat} with
$$
\g=0, \jump{\mu}=0
$$ in the Rayleigh-Taylor unstable case
$$
\jump{\rho}>0.
$$
Assume that
$$
\pa_sz_1(s)\geq 0.
$$
Then
\beq\label{h(t) decrease}
\f{d}{dt}m(t)\leq 0,
\eeq
where
$$
m(t)=\min_{s\in\BR} z_2(s,t).
$$
\end{proposition}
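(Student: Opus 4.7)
The plan is to exploit the explicit expression for $\tom$ available in this regime, combined with the image-kernel identity and integration by parts, where the hypothesis $\pa_s z_1(s)\geq 0$ supplies the crucial sign.

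First I would evaluate the evolution equation \eqref{eq of h} at the minimum. Under $\gamma=0$ and $\jump{\mu}=0$, the Muskat integral equation \eqref{muskat} collapses to the pointwise relation
\begin{equation*}
\tom(s)=\frac{\jump{\rho}g}{\mu}\,\pa_s z_2(s),
\end{equation*}
so substitution into \eqref{eq of h} gives
\begin{equation*}
\f{d}{dt}m(t)=\f{\jump{\rho}g}{2\pi\mu}\,\text{p.v.}\!\int_\BR(z_1(\al_t)-z_1(s))\!\left[\f{1}{|\bz(\al_t)-\bz(s)|^2}-\f{1}{|\bz(\al_t)-(z_1(s),-z_2(s))|^2}\right]\!\pa_s z_2(s)\,ds.
\end{equation*}

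Next I would apply the algebraic identity
\begin{equation*}
\f{1}{|\bz(\al_t)-\bz(s)|^2}-\f{1}{|\bz(\al_t)-(z_1(s),-z_2(s))|^2}=\f{4\,m(t)\,z_2(s)}{|\bz(\al_t)-\bz(s)|^2\,|\bz(\al_t)-(z_1(s),-z_2(s))|^2},
\end{equation*}
which pulls the factor $m(t)$ out of the kernel. Since $\jump{\rho}g>0$, $\mu>0$, and $m(t)\geq 0$, the sign of $\tfrac{d}{dt}m$ coincides with the sign of
\begin{equation*}
J:=\int_\BR \f{(z_1(\al_t)-z_1(s))\,z_2(s)\,\pa_s z_2(s)}{|\bz(\al_t)-\bz(s)|^2\,|\bz(\al_t)-(z_1(s),-z_2(s))|^2}\,ds,
\end{equation*}
and it suffices to show $J\leq 0$.

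The core of the argument is then to write $z_2\,\pa_s z_2=\tfrac{1}{2}\pa_s(z_2^2-m^2)$, integrate by parts, and use the graph hypothesis $\pa_s z_1\geq 0$ to change variables $t=z_1(s)$, so that the interface is represented as $z_2=f(t)$ with $f\geq m$ and $f(a)=m$ where $a=z_1(\al_t)$. Setting $\widetilde A(t)=(a-t)^2+(f-m)^2$ and $\widetilde B(t)=(a-t)^2+(f+m)^2$, the integral becomes
\begin{equation*}
J=\tfrac{1}{2}\int_\BR \f{(a-t)}{\widetilde A\widetilde B}\,\pa_t(f^2-m^2)\,dt =-\tfrac{1}{2}\int_\BR (f^2-m^2)\,\pa_t\!\left[\f{a-t}{\widetilde A\widetilde B}\right]dt,
\end{equation*}
with the boundary terms vanishing because $(a-t)/(\widetilde A\widetilde B)=O(|t|^{-3})$ at infinity and $(f^2-m^2)$ stays bounded. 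The last step is to verify that the weight
\begin{equation*}
(f^2-m^2)\,\pa_t\!\left[\f{a-t}{\widetilde A\widetilde B}\right]
\end{equation*}
is pointwise non-negative, using $f^2-m^2\geq 0$ and the explicit expansion of $\pa_t(\widetilde A\widetilde B)$ at the critical point $t=a$ (where $f(a)=m$ and $f'(a)=0$). That yields $J\leq 0$ and hence $\tfrac{d}{dt}m(t)\leq 0$.

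The main obstacle I anticipate is precisely this last positivity check: after the integration by parts the derivative $\pa_t[(a-t)/(\widetilde A\widetilde B)]$ contains a piece proportional to $ff'$ that does not have an a priori sign. Overcoming it will require either a second integration by parts to eliminate the explicit $f'$ in favour of $f^2-m^2$, or a direct algebraic rearrangement that combines the $-1/(\widetilde A\widetilde B)$ term with the $f'$-term into a manifestly non-positive expression. This is precisely where both hypotheses of the proposition enter: $\pa_s z_1\geq 0$ allows the passage to the graph formulation (so that $f'$ is well defined and no orientation ambiguity appears), while $f\geq m$ forces $f^2-m^2\geq 0$, and together these give the definite sign needed to conclude.
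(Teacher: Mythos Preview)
Your reduction to the sign of
\[
J=\int_\BR \frac{(z_1(\al_t)-z_1(s))\,z_2(s)\,\pa_s z_2(s)}{|\bz(\al_t)-\bz(s)|^2\,|\bz(\al_t)-(z_1(s),-z_2(s))|^2}\,ds
\]
is correct, but the argument stops precisely at the hard step. You yourself flag that after integrating by parts the weight $\pa_t\big[(a-t)/(\widetilde A\widetilde B)\big]$ acquires an $ff'$ contribution of indefinite sign, and you offer only the hope that ``a second integration by parts'' or ``a direct algebraic rearrangement'' will fix it. That is the whole content of the proposition; as written this is a plan with its central lemma missing. Note also that the pointwise positivity you propose to check cannot hold in general: for a generic profile $f$ the kernel $(a-t)/(\widetilde A\widetilde B)$ is not monotone in $t$, so you cannot expect $(f^2-m^2)\,\pa_t[(a-t)/(\widetilde A\widetilde B)]\ge 0$ pointwise. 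Any successful argument along your lines would have to be \emph{integral} in nature, not pointwise, and you have not indicated what cancellation to exploit.

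The paper closes this gap by a different mechanism. Instead of working with $\pa_s z_2$ (which carries no sign hypothesis), it introduces the companion integral
\[
\tilde I=\text{p.v.}\int_\BR \left[\frac{\pa_s z_1(s)(z_2(\al_t)-z_2(s))}{|\bz(\al_t)-\bz(s)|^2}+\frac{\pa_s z_1(s)(z_2(\al_t)+z_2(s))}{|\bz(\al_t)-(z_1(s),-z_2(s))|^2}\right]ds
\]
with $\pa_s z_1$ in the numerator, and proves the exact identity $\tilde I-I=\pi$ by recognising $\tilde I-I$ as the imaginary part of the contour integrals of $(z-z(\al_t))^{-1}$ and $(\bar z(\al_t)-z)^{-1}$ along $\Gamma$ and applying Cauchy's theorem. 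One is then left with showing $\pi-\tilde I\ge 0$, which reduces to an integrand that is \emph{manifestly} nonnegative because every factor carries a definite sign: $\pa_s z_1(s)\ge 0$ by hypothesis and $z_2(s)-z_2(\al_t)\ge 0$ by minimality. The role of the assumption $\pa_s z_1\ge 0$ is thus not to legitimise a graph change of variables, as in your outline, but to provide the sign of the integrand once the complex-analytic identity has traded $\pa_s z_2$ for $\pa_s z_1$. That trade is the idea your proposal is missing.
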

\begin{proof}
We observe that  even if in the RT unstable the Muskat problem is ill-posed in Sobolev spaces in absence of capillary forces, the solution exists and is unique in the analytic setting \cite{cordoba2014confined}. We define $\al_t$ as in \eqref{def of h}. Due to the regularity of $z$ we have that $m$ satisfies the equation \eqref{eq of h}. By \eqref{eq of h} and $\g=0$ it suffices to show that, after taking
$$
\jump{\rho}g=2\pi
$$
without loss of generality,
\beq\label{ineq for I}
0\geq I:=p.v.\int_\BR \f{\pa_s z_2(s)(z_1(\al_t)-z_1(s))}{|\bz(\al_t)-\bz(s)|^2}-\f{\pa_s z_2(s)(z_1(\al_t)-z_1(s))}{|\bz(\al_t)-(z_1(s),-z_2(s))|^2}\,ds.
\eeq
We define
\beq\label{def of tilde I}
\tilde{I}:= p.v.\int_\BR \f{\pa_s z_1(s)(z_2(\al_t)-z_2(s))}{|\bz(\al_t)-\bz(s)|^2}+\f{\pa_s z_1(s)(z_2(\al_t)+z_2(s))}{|\bz(\al_t)-(z_1(s),-z_2(s))|^2}\,ds.
\eeq
We first show that
\beq\label{relation of I and tildeI}
\tilde{I}-I=\pi
\eeq

To prove this, we use the basic tools from complex integrals. Write
\beqo
z(s):=z_1(s)+iz_2(s).
\eeqo
$\G(t)$ is a curve in the complex plane. Direct calculation implies that
\beq\label{complex integral}
\tilde{I}-I=\lim\limits_{r\ri 0,R\ri\infty}\mathrm{Im}\left( \int_{\G\cap \{r\leq |z-z(\al)|\leq R}  \f{dz}{z-z(\al_t)}+\f{dz}{\bar{z}(\al_t)-z} \right),
\eeq
where $\mathrm{Im}$ means the imaginary part of a complex number. We define
\beqo
\G_{r,R}:=\left(\G(t)\cap \{r\leq |z-z(\al_t)|\leq R\})\right)\cap \left( \{|z-z(\al_t)|=r\}\cap \Om_+(t) \right)\cap \left(\{|z-z(\al_t)|=R\}\cap \Om_+(t) \right),
\eeqo
for $ r\ll 1,\; R\gg 1$.  According to \eqref{behavior at infty} and the regularity assumption of $\G(t)$, we know that $\G(t)$ is very flat near the minimal point $z(\al)$ and at infinity, which further implies that $\G_{r,R}$ is a well-defined simple closed curve when $r$ is suitably small and $R$ is appropriately large. It is obvious that the functions
$$
\f{1}{z-z(\al_t)}
$$
and
$$\f{1}{\bar{z}(\al_t)-z}
$$
do not contain any poles in the region enclosed by $\G_{r,R}$. We can use Cauchy's integral theorem to derive that
\beq\label{cauchy theorm 1}
\begin{split}
0=&\lim\limits_{r\ri 0,R\ri\infty} \int_{\G_{r,R}} \f{dz}{z-z(\al_t)}\\
=&\lim\limits_{r\ri 0,R\ri\infty} \int_{\G\cap \{r\leq |z-z(\al_t)|\leq R\}} \f{dz}{z-z(\al_t)}+\lim\limits_{R\ri\infty}\int_{\{|z-z(\al_t)|= R\}\cap\Om_+}\f{dz}{z-z(\al_t)}\\
&\qquad +\lim\limits_{r\ri 0}\int_{\{|z-z(\al_t)|= r\}\cap\Om_+} \f{dz}{z-z(\al_t)}\\
=& \lim\limits_{r\ri 0,R\ri\infty} \int_{\G\cap \{r\leq |z-z(\al_t)|\leq R\}} \f{dz}{z-z(\al_t)}+\pi i-\pi i\\
=& \lim\limits_{r\ri 0,R\ri\infty} \int_{\G\cap \{r\leq |z-z(\al_t)|\leq R\}} \f{dz}{z-z(\al_t)}.
\end{split}
\eeq
Similarly,,
\beq\label{cauchy theorm 2}
\begin{split}
0=&\lim\limits_{r\ri 0,R\ri\infty} \int_{\G_{r,R}} \f{dz}{\bar{z}(\al_t)-z}\\
=&\lim\limits_{r\ri 0,R\ri\infty} \int_{\G\cap \{r\leq |z-z(\al_t)|\leq R\}} \f{dz}{\bar{z}(\al_t)-z}+\lim\limits_{R\ri\infty}\int_{\{|z-z(\al_t)|= R\}\cap\Om_+}\f{dz}{\bar{z}(\al_t)-z}\\
&\qquad +\lim\limits_{r\ri 0}\int_{\{|z-z(\al_t)|= r\}\cap\Om_+} \f{dz}{\bar{z}(\al_t)-z}\\
=& \lim\limits_{r\ri 0,R\ri\infty} \int_{\G\cap \{r\leq |z-z(\al_t)|\leq R\}} \f{dz}{\bar{z}(\al_t)-z}-\pi i.
\end{split}
\eeq
Equation \eqref{relation of I and tildeI} then follows immediately from \eqref{complex integral}, \eqref{cauchy theorm 1} and \eqref{cauchy theorm 2}.

Now we only need to prove $\tilde{I}\leq \pi$. We compute
\beqo
\begin{split}
&\pi-\tilde{I}\\
=& \int_\BR \f{2\pa_s z_1(s)z_2(\al_t)}{|z_1(\al_t)-z_1(s)|^2+(2z_2(\al_t))^2}\,ds \\
 &\quad - p.v.\int_\BR \f{\pa_s z_1(s)(z_2(\al_t)-z_2(s))}{|\bz(\al_t)-\bz(s)|^2}+\f{\pa_s z_1(s)(z_2(\al_t)+z_2(s))}{|\bz(\al_t)-(z_1(s),-z_2(s))|^2}\,ds\\
=&\left( \int_\BR \f{2\pa_s z_1(s)z_2(\al_t)}{|z_1(\al_t)-z_1(s)|^2+(2z_2(\al_t))^2}\,ds -\int_{\BR}\f{2\pa_s z_1(s)z_2(\al)}{|\bz(\al_t)-(z_1(s),-z_2(s))|^2}\,ds\right)\\
&\quad +\bigg( \int_{\BR}\f{2\pa_s z_1(s)z_2(\al_t)}{|\bz(\al_t)-(z_1(s),-z_2(s))|^2}\,ds\\
&\quad\quad - p.v.\int_\BR \f{\pa_s z_1(s)(z_2(\al_t)-z_2(s))}{|\bz(\al_t)-\bz(s)|^2}+\f{\pa_s z_1(s)(z_2(\al_t)+z_2(s))}{|\bz(\al_t)-(z_1(s),-z_2(s))|^2}\,ds\bigg)\\
=& \int_{\BR} \f{2\pa_sz_1(s)z_2(\al_t)(3z_2(\al_t)+z_2(s))(z_2(s)-z_2(\al_t))}{|\bz(\al_t)-(z_1(s),-z_2(s))|^2(|z_1(\al_t)-z_1(s)|^2+(2z_2(\al_t))^2)}\,ds\\
&\quad + \int_\BR \f{4\pa_sz_1(s)z_2(\al_t)z_2(s)(z_2(s)-z_2(\al_t))}{|\bz(\al_t)-\bz(s)|^2|\bz(\al_t)-(z_1(s),-z_2(s))|^2}\,ds.
\end{split}
\eeqo
Using the hypothesis $\pa_sz_1(s)\geq 0$, then the fact that $\pi-\tilde{I}\geq 0$ follows immediately since every term in the final two integrands is non-negative.
\end{proof}

We observe that this proposition generalizes the result in \cite{cordoba2014confined} to the case where the curve has a vertical tangent.

\begin{proof}[Proof of Theorem \ref{teomuskat}]
The case with
$$
\jump{\mu}=0
$$
follows from Theorem \ref{teo1} observing that
$$
\tom(\alpha)=
\gamma\pa_\al\kappa+\jump{\rho}g\pa_\al z_2(\alpha),
$$
so
$$
\|\tom\|_{C^1}\leq C\|z\|_{C^4}
$$
if $\gamma>0$ and
$$
\|\tom\|_{C^1}\leq C\|z\|_{C^2}
$$
if $\gamma=0$.
The case with
$$
\jump{\mu}\neq0
$$
is an application of Theorem \ref{teo1} once we recall that
$$
\|\widetilde{\om}\|_{C^1([0,T]\times \Gamma(t))}=\|\jump{v\cdot\pa_\al \bz}\|_{C^1([0,T]\times \mathbb{R})}\leq A,
$$
by the hypotheses of the theorem.
\end{proof}

\section{The internal waves problem}
\begin{proof}[Proof of Theorem \ref{teowaves}]
This theorem follows from an application of Theorem \ref{teo1} noticing that
$$
\|\widetilde{\om}\|_{C^1([0,T]\times \Gamma(t))}=\|\jump{v\cdot\pa_\al \bz}\|_{C^1([0,T]\times \mathbb{R})}\leq A,
$$
by the hypotheses of the theorem.
\end{proof}

\section*{Acknowledgments} Z.G. was supported by the Basque Government through the BERC 2022-2025 program and by the Spanish State Research Agency through BCAM Severo Ochoa excellence accreditation SEV-2017-0718 and through project PID2020-114189RB-I00 funded by Agencia Estatal de Investigaci\'{o}n (PID2020-114189RB-I00 / AEI / 10.13039/501100011033). R.G-B was supported by the project "Mathematical Analysis of Fluids and Applications" Grant PID2019-109348GA-I00 funded by MCIN/AEI/ 10.13039/501100011033 and acronym "MAFyA". This publication is part of the project PID2019-109348GA-I00 funded by MCIN/ AEI /10.13039/501100011033. R.G-B is also supported by a 2021 Leonardo Grant for Researchers and Cultural Creators, BBVA Foundation. The BBVA Foundation accepts no responsibility for the opinions, statements, and contents included in the project and/or the results thereof, which are entirely the responsibility of the authors. Part of this research was performed when R.G-B was Visiting Fellow of the Basque Center for Applied Mathematics. R.G-B is grateful to Basque Center for Applied Mathematics for their hospitality during this visit. 

\bibliographystyle{abbrv}

\end{document}